\documentclass[a4paper]{amsart}
\oddsidemargin 0mm
\evensidemargin 0mm
\topmargin 10mm
\textwidth 160mm
\textheight 230mm
\tolerance=9999
\usepackage[latin1]{inputenc}
\usepackage{amssymb}
\usepackage{amsmath}
\usepackage{mathrsfs}
\usepackage{eufrak}
\usepackage{amsthm}
\usepackage{amsfonts}
\usepackage{textcomp}
\usepackage{graphicx}
\usepackage[pdftex]{color}
\usepackage{paralist}
\usepackage[shortlabels]{enumitem}
\usepackage{hyperref}
\usepackage{comment}
\usepackage[arrow, matrix, curve]{xy}

\newtheorem*{corollary*}{Corollary}
\newtheorem{theorem}{Theorem}[section]

\newtheorem{corollary}[theorem]{Corollary}
\newtheorem{lemma}[theorem]{Lemma}
\newtheorem{proposition}[theorem]{Proposition}

\newtheorem*{claim*}{Claim}

\theoremstyle{definition}

\newtheorem*{theorem }{Theorem}
\newtheorem*{theorem*}{Theorem}

\newtheorem{example}[theorem]{Example}

\theoremstyle{remark}

\numberwithin{equation}{theorem}

\makeatletter
\renewcommand*\env@matrix[1][\
arraystretch]{%
  \edef\arraystretch{#1}%
  \hskip -\arraycolsep
  \let\@ifnextchar\new@ifnextchar
  \array{*\c@MaxMatrixCols c}}
\makeatother



\begin{document}

\title{On monomial algebras having the double centraliser property}
\date{\today}

\subjclass[2010]{Primary 16G10, 16E10}

\keywords{monomial algebras, double centraliser property, dominant dimension, Nakayama algebras}

\author{Ren\'{e} Marczinzik}
\address{Institute of algebra and number theory, University of Stuttgart, Pfaffenwaldring 57, 70569 Stuttgart, Germany}
\email{marczire@mathematik.uni-stuttgart.de}

\begin{abstract}
Let $A$ be a finite dimensional algebra having the double centraliser property with respect to a minimal faithful projective-injective left module $Af$ for some idempotent $f$. We prove that in this case $A$ is a monomial algebra if and only if $A$ is a Nakayama algebra given by quiver and relations.
\end{abstract}

\maketitle
\section*{Introduction}
We assume throughout this article that algebras are finite dimensional algebras over a field $K$. A projective left module $Af$ with an idempotent $f$ is said to be \emph{minimal faithful projective-injective} in case $Af$ is a faithful projective-injective left module and every faithful projective-injective left module has $Af$ as a direct summand. The notion of a minimal faithful projective-injective right module is defined similarly. $A$ is said to have the \emph{double centraliser property} with respect to the minimal faithful projective-injective module $Af$ in case $A \cong End_{fAf}(Af)$ and $fAf \cong End_A(Af)$ (where the second isomorphism is automatic and always holds for any idempotent $f$).
This double centraliser property occurs in many places in mathematics, we mention the Schur-Weyl duality where $A$ is the Schur algebra $S(n,r)$ for $n \geq r$ and $fAf$ is the group algebra of the corresponding symmetric group and also the doublce centraliser property of blocks of category $\mathcal{O}$ arising from Lie theory where $A$ is some block of the Bernstein-Gelfand-Gelfang category $\mathcal{O}$ and $fAf$ is a symmetric local algebra, we refer to \cite{KSX} for proofs and more on this. Let $A$ be an algebra with minimal injective coresolution $(I_i)$ of the regular module $A$:
$$0 \rightarrow A \rightarrow I_0 \rightarrow I_1 \rightarrow \cdots .$$
The \emph{dominant dimension} of $A$ is defined as zero in case $I_0$ is not projective and equal to $\sup \{ n \geq 0 | I_i$ is projective for $i=0,1,...,n \}+1$ in case $I_0$ is projective. It can be shown that $A$ has dominant dimension at least one if and only if there is a minimal faithful projective-injective right module $eA$ if and only if there is a minimal faithful projective-injective left module $Af$, see for example chapter 4 of \cite{Ta}. Note that for algebras with dominant dimension at least one, one has $eAe \cong fAf$ as algebras.
Furthermore $A$ has the double centraliser property with respect to a minimal faithful projective-injective left module $Af$ if and only if it has dominant dimension at least two, see for example \cite{Ta} chapter 10. The class of algebras having dominant dimension at least two is very large and includes for example the higher Auslander algebras introduced by Iyama in \cite{Iya} and the Morita algebras introduced by Kerner and Yamagata recently in \cite{KerYam}.
Because of this equivalent characterisation via the dominant dimension of algebras having the double centraliser property with respect to a minimal faithful projective-injective module, we will often speak for short about algebras with dominant dimension at least two instead of the longer term of algebras having the double centraliser property with respect to a minimal faithful projective-injective module.
Recall that an algebra $A$ is called \emph{monomial} in case $A \cong KQ/I$ for some finite quiver $Q$ with an admissible ideal $I$ that is monomial, which means that it is generated by non-zero paths. Note that an algebra $A$ of the form $KQ/I_1$ with admissible non-monomial ideal $I_1$ can be isomorphic to an algebra $KQ/I_2$, where $I_2$ is an admissible monomial ideal and thus $A$ is a monomial algebra even though $I_1$ is not monomial, see for example exercise 4 in chapter I. of \cite{SkoYam}.
In this article we give a classification of monomial algebras having the double centraliser property with respect to a minimal faithful projective-injective module. Since we will deal here mainly with monomial algebras, we will assume in the following that every algebra is connected and given by quiver and admissible relations if not stated otherwise. Recall that a \emph{Nakayama algebra} (some authors call those algebras serial algebras) is an algebra where every indecomposable projective and every indecomposable injective module is uniserial. It can be shown that for Nakayama algebras in fact every indecomposable module is uniserial and the quiver of a Nakayama algebra can have only two shapes as in the following.
The quiver of a Nakayama algebra with a cyclic quiver:
$$Q=\begin{xymatrix}{ &  \circ^0 \ar[r] & \circ^1 \ar[dr] &   \\
\circ^{n-1} \ar[ur] &     &     & \circ^2 \ar[d] \\
\circ^{n-2} \ar[u] &  &  & \circ^3 \ar[dl] \\
   & \circ^5 \ar @{} [ul] |{\ddots} & \circ^4 \ar[l] &  }\end{xymatrix}$$
\newline
\newline
The quiver of an Nakayama algebra with a linear quiver:
$$Q=\begin{xymatrix}{ \circ^0 \ar[r] & \circ^1 \ar[r] & \circ^2 \ar @{} [r] |{\cdots} & \circ^{n-2} \ar[r] & \circ^{n-1}}\end{xymatrix}$$

Especially: Every Nakayama algebra is a monomial algebra. We refer for example to \cite{AnFul} and \cite{SkoYam} for proofs and more on Nakayama algebras.
For a module $N$, $add(N)$ denotes the full subcategory of $mod-A$ consisting of finite direct sums of indecomposable modules that are direct summands of $N$. 
Recall that a module $N$ is a generator in case it contains every indecomposable projective module as a direct summand and $N$ is a cogenerator in case it contains every indecomposable injective module as a direct summand. We call a module $N$ \emph{generator-cogenerator} in case it is a generator and a cogenerator. A module $N$ is called basic in case it does not have a direct summand of the form $M^2$ where $M$ is an indecomposable non-zero module. $D:=Hom_K(-,K)$ denotes the natural duality of a finite dimensional algebra.
Our main theorem of this article can be stated as follows:
\begin{theorem*}
Let $A$ be a finite dimensional algebra.
The following are equivalent:
\begin{enumerate}
\item $A$ is a monomial algebra with dominant dimension at least two.
\item $A \cong End_B(M)$, where $B$ is a Nakayama algebra and $M$ a basic generator-cogenerator in \newline $add(B \oplus D(B) \oplus D(B)/soc(D(B)))$.
\item $A$ is a Nakayama algebra with dominant dimension at least two.
\end{enumerate}
\end{theorem*}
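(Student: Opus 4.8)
The plan is to prove the cycle $(1)\Rightarrow(3)\Rightarrow(2)\Rightarrow(1)$. The implication $(3)\Rightarrow(1)$ is in any case immediate, since every Nakayama algebra is monomial and the dominant dimension is retained by hypothesis; so the whole content sits in the other three arrows, and by far the hardest of them is $(1)\Rightarrow(3)$. Concretely, for $(1)\Rightarrow(3)$ it suffices to show that the quiver $Q$ of a monomial algebra $A=KQ/I$ with $\domdim A\ge 2$ is a \emph{Nakayama quiver}, i.e.\ that each vertex has at most one incoming and at most one outgoing arrow: once $Q$ is a line or an oriented cycle, every admissible monomial quotient has all indecomposable projectives and injectives uniserial, hence is a Nakayama algebra, and it has $\domdim\ge 2$ by assumption.

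For the quiver analysis I would first halve the work. Passing to $A^{\mathrm{op}}$ preserves monomiality and, since the dominant dimension is left--right symmetric, also preserves $\domdim A\ge 2$; moreover ``$v$ has two outgoing arrows in $A$'' becomes ``$v$ has two incoming arrows in $A^{\mathrm{op}}$''. Hence it is enough to prove that no vertex carries two distinct outgoing arrows. Next I would record the standard combinatorics of monomial algebras: $\soc(e_iA)$ is spanned by the maximal non-extendable nonzero paths starting at $i$; an indecomposable projective with simple socle is automatically uniserial (two distinct basis paths would extend to two distinct maximal paths, forcing a socle of length $\ge 2$), so every indecomposable projective-injective module is uniserial; and, reading off the injective envelope $I_0$ of $A$, one gets that $\domdim A\ge 1$ is equivalent to the condition that for every simple $S_j$ occurring in $\soc(A)$ the injective envelope $I(S_j)\cong D(Ae_j)$ is projective.

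The crucial and most delicate point is that $\domdim A\ge 1$ alone does \emph{not} force $Q$ to be Nakayama: one can write down monomial algebras with a genuine branch vertex and $\domdim = 1$, in which the two outgoing branches at $v$ are separated by zero-relations on the incoming paths so that each relevant $I(S_j)$ remains projective. Therefore the argument must genuinely exploit that the \emph{second} injective term $I_1$ is projective. The heart of $(1)\Rightarrow(3)$, and the main obstacle, is to show that a branch at $v$ forces a simple module $S_l$ into $\soc(I_0/A)$ whose injective envelope $I(S_l)\cong D(Ae_l)$ is \emph{not} projective, because the left projective $Ae_l$ is itself forced to be non-uniserial (it acquires a non-simple top coming from the two branches). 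Since $I_1\cong I(I_0/A)$ must be projective, this is a contradiction, so no branch vertex can exist. Carrying this out requires a careful description of the cokernel $I_0/A$ in terms of the maximal paths of $A$ and the precise location of the offending composition factor $S_l$; this combinatorial bookkeeping is where the real effort lies.

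For $(3)\Rightarrow(2)$ I would take the minimal faithful projective-injective module $Af$ of the Nakayama algebra $A$ and set $B:=fAf$, so that the double centraliser property gives $A\cong\End_B(Af)$. One then checks that $B$ is again a Nakayama algebra and that, as a $B$-module, each indecomposable summand of $Af$ is projective, injective, or an injective with its socle removed; that is, $Af$ is a basic generator-cogenerator lying in $\add(B\oplus D(B)\oplus D(B)/\soc(D(B)))$, which is exactly $(2)$. Finally, for $(2)\Rightarrow(1)$, the module $M$ is a generator-cogenerator, so $A=\End_B(M)$ has $\domdim\ge 2$ by the Morita--Tachikawa correspondence; monomiality is then obtained by a direct computation of $\End_B(M)$, using that all $B$-modules are uniserial and that the summands of $M$ have the very restricted shape prescribed in $(2)$, which forces the quiver of $A$ to be a line or a cycle with monomial relations. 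I expect this last computation to be routine but lengthy, whereas the branch-exclusion in $(1)\Rightarrow(3)$ is the genuine obstacle.
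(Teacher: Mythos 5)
Your cycle of implications $(1)\Rightarrow(3)\Rightarrow(2)\Rightarrow(1)$ is the same one the paper uses (in its own numbering), but your proposal defers precisely the steps that constitute the proof, so as it stands it has genuine gaps rather than a complete argument. For $(1)\Rightarrow(3)$ you reduce to excluding branch vertices and then state that the required description of $\soc(I_0/A)$ ``is where the real effort lies'' --- without carrying it out; for $(3)\Rightarrow(2)$ you say ``one then checks'' that every indecomposable summand of $Af$ over $B=fAf$ is projective, injective, or an injective modulo its socle, which is exactly the nontrivial direction of Yamagata's theorem (the main result of \cite{Yam2}, quoted by the paper as Theorem \ref{yamagatatheorem}); and for $(2)\Rightarrow(1)$ you replace the other direction of that same theorem by an unexecuted ``routine but lengthy'' computation. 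Naming the obstacles is not the same as overcoming them.

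Moreover, the one piece of justification you do give for the crucial step is incorrect as stated. An indecomposable projective $Ae_l$ over a basic algebra always has simple top, so it cannot ``acquire a non-simple top''; what you need is a non-simple \emph{socle}, i.e.\ two maximal paths \emph{ending} at $l$. But a branch vertex $v$ (two outgoing arrows) only produces two maximal paths \emph{starting} at $v$, hence a non-uniserial right projective $e_vA$; it does not by itself produce any non-uniserial left projective --- for that you need a merge vertex (in the quiver $2 \leftarrow 1 \rightarrow 3$ all left projectives are uniserial). So the existence of your offending simple $S_l$ in $\soc(I_0/A)$ with $Ae_l$ non-uniserial is not a local consequence of the branch; it would have to be extracted from the global constraint that $I_0$ is projective, and that extraction is exactly the missing argument (note also that $D(Ae_l)$ can fail to be projective even when $Ae_l$ is uniserial, so you are aiming at a stronger statement than needed). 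The paper sidesteps all of this combinatorics: it quotes that the base algebra $fAf$ of a monomial algebra is Nakayama (\cite{Mar}, Proposition \ref{nakayamapropo}), proves that endomorphism algebras of generator-cogenerators over Nakayama algebras are QF-2 (Proposition \ref{nakayamaqf2}, via the Auslander--Platzeck--Todorov lemma \ref{APTlemma}), and then eliminates branch and merge vertices by the short monomial-QF-2 argument of Lemma \ref{monomialqf2impliesnaka}; the equivalence with the $\End_B(M)$ description is then read off from Theorem \ref{domdim2chara} and Theorem \ref{yamagatatheorem}. If you want to salvage your outline, the cleanest repair is to cite \cite{Yam2} for both implications involving condition $(2)$ and to replace your branch-exclusion analysis by an argument through socles of projectives in the spirit of QF-2 algebras.
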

We apply this theorem to give also a classification of monomial Morita algebras.
The author thanks Aaron Chan for useful discussions and allowing him to use his example of a monomial algebra with dominant dimension equal to one that is not a Nakayama algebra in \ref{finalexample}.

\section{Proof of the main theorem}
We assume that all algebras are given by quiver and relations and are connected finite dimensional algebras over a field $K$. We assume that all modules are finite dimensional right if not stated otherwise. We remark that we still often use left modules when talking about minimal faithful projective-injective left modules, since here the double centraliser property has a nicer form when using left instead of right modules.
We assume that the reader is familiar with the basics of representation theory of finite dimensional algebras and refer for example to the books \cite{ASS}, \cite{SkoYam} and \cite{DW}. We refer also to \cite{Yam} for a survey article that treats dominant dimension and \cite{Ta} for a textbook that treats dominant dimension and double centraliser properties.
Before we can prove the main theorem of this article we recall some results from the literature.
\begin{theorem} \label{domdim2chara}
The following are equivalent for a finite dimensional algebra $A$:
\begin{enumerate}
\item $A$ has dominant dimension at least two.
\item $A \cong End_B(M)$ for an algebra $B$ with a generator-cogenerator $M$.
\item $A$ has the double centraliser property with respect to a minimal faithful projective-injective left module $Af$.
\end{enumerate}
\end{theorem}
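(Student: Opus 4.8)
The plan is to treat the equivalence of (1) and (3) as already available: it is precisely the characterisation of dominant dimension at least two via the double centraliser property recalled in the introduction (\cite{Ta}, chapter~10), so I would cite it and concentrate the effort on the equivalence of (1) and (2), which is the Morita--Tachikawa correspondence.

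For the implication (2)$\Rightarrow$(1), I regard $M$ as a left $B$-module and make it a right $A$-module by precomposition, so that $\Hom_B(M,M)=A$ as a right $A$-module. I would start from an injective copresentation $0\to M\to I^0\to I^1$ of $M$ in $\mod B$. Since $M$ is a cogenerator, the injective modules $I^0,I^1$ lie in $\add(M)$, so applying the left exact functor $\Hom_B(M,-)$ yields an exact sequence $0\to A\to \Hom_B(M,I^0)\to\Hom_B(M,I^1)$ whose outer terms are projective right $A$-modules. The heart of the argument is the claim that $\Hom_B(M,E)$ is moreover an injective $A$-module whenever $E$ is an injective $B$-module. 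To establish it I would first record that $M$ is projective as a left $A$-module, by the classical fact that a generator is finitely generated projective as a module over its own endomorphism ring; here ${}_BB\in\add(M^n)$ because $M$ is a generator. Then the adjunction isomorphism $\Hom_B(M,D(B))\cong D(M)$ exhibits $\Hom_B(M,D(B))$ as the $K$-dual of a projective left $A$-module, hence as an injective right $A$-module, and passing to direct summands covers every injective $E$. Thus the two outer terms above are projective-injective, and since the first two terms of the minimal injective coresolution of $A$ are termwise direct summands of these, they too are projective-injective, giving $\domdim A\ge 2$.

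For (1)$\Rightarrow$(2), I would feed in (3): dominant dimension at least two provides a minimal faithful projective-injective left module $Af$ together with the double centraliser isomorphism $A\cong \End_{fAf}(Af)$. Setting $B:=fAf$ and $M:=Af$, viewed as a right $B$-module, this is exactly $A\cong\End_B(M)$, so the only remaining point is that $M$ is a generator-cogenerator over $B$. The generator property I would extract from projectivity of $Af$: decomposing $Af=\bigoplus_e eAf$ over the primitive idempotents $e$ of $A$, the summands attached to the projective-injective idempotents $e$ are exactly the indecomposable projective right $B$-modules, so all of them occur in $M$. For the cogenerator property I would dualise: $fA$ is the minimal faithful projective-injective right module of $A$, the assertion that $M$ is a cogenerator over $B$ translates into the assertion that the corresponding module is a generator over $B^{\mathrm{op}}=fA^{\mathrm{op}}f$, and this follows from injectivity of $Af$ by running the same bookkeeping for $A^{\mathrm{op}}$.

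The main obstacle, common to both directions, is the correspondence between projective-injective $A$-modules and injective $B$-modules: in (2)$\Rightarrow$(1) it surfaces as the injectivity of $\Hom_B(M,E)$, and in (1)$\Rightarrow$(2) as the cogenerator half of the generator-cogenerator property. Pinning down the module- and side-conventions in the duality $\Hom_B(M,D(B))\cong D(M)$ — in particular the fact that a generator is projective over its own endomorphism ring — is the step I would treat most carefully; once it is in place, both implications close quickly.
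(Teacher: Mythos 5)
Your direction (2)$\Rightarrow$(1) is sound and is the classical Morita--Tachikawa argument: since $M$ is a cogenerator, an injective copresentation $0\to M\to I^0\to I^1$ has $I^0,I^1\in\add(M)$; applying $\Hom_B(M,-)$ gives $0\to A\to \Hom_B(M,I^0)\to \Hom_B(M,I^1)$ with projective outer terms; injectivity of $\Hom_B(M,E)$ follows as you say from $\Hom_B(M,D(B))\cong D(M)$ and projectivity of $M$ over its endomorphism ring; and the terms of the minimal injective copresentation of $A$ split off these, giving $\domdim A\geq 2$. Note that here you do strictly more than the paper, whose entire proof of Theorem \ref{domdim2chara} is a reference to \cite{Ta} (chapter 10) and \cite{Rin}. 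The generator half of (1)$\Rightarrow$(2) is also fine, since $Af=fAf\oplus(1-f)Af$ as right $fAf$-modules.

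The gap is in the cogenerator half of (1)$\Rightarrow$(2), exactly at the step you yourself flagged as delicate. Your argument rests on the assertion that $fA$ is the minimal faithful projective-injective right module; this is false in general, because duality does not preserve the idempotent (there is a Nakayama-type permutation). A counterexample inside the very class this paper classifies: let $A=KQ/I$ with $Q\colon 1\to 2\to 3$ and $I$ generated by the length-two path (so $A$ is the radical-square-zero Nakayama algebra on linear $\DynA_3$). Then $\domdim A=2$ and, in the paper's composition convention, the minimal faithful projective-injective left module is $Af$ with $f=e_2+e_3$; but $fA=e_2A\oplus e_3A$ has the simple projective, non-injective right module $e_3A$ as a direct summand, so $fA$ is not even injective, and the minimal faithful projective-injective right module is $eA$ with $e=e_1+e_2\neq f$. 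Consequently $D(Af)\not\cong fA$, and "running the same bookkeeping for $A^{\mathrm{op}}$" with the same idempotent $f$ does not close the argument. To repair it you must let the idempotent move: either prove the bimodule identifications $D(Af)\cong eA$ and $fAf\cong eAe$ and transfer the (trivial) generator property of $eA$ over $eAe$ through $D$; or argue directly that every indecomposable injective right $fAf$-module $D(fAc)$, for $c$ a primitive summand of $f$, satisfies $D(fAc)\cong D(fD(c'A))\cong D(D(c'Af))\cong c'Af$ for some primitive idempotent $c'$ of $A$ (using that $Ac$ is projective-injective, hence $Ac\cong D(c'A)$), and $c'Af$ is a direct summand of $Af=\bigoplus_{c''}c''Af$ as right $fAf$-modules. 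This shift $c\mapsto c'$ (equivalently $f\mapsto e$) is the real content of the cogenerator statement, and it is what your sketch is missing.
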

\begin{proof}
See for example \cite{Ta} chapter 10 or \cite{Rin}.
\end{proof}
The algebra $B$ as in the previous theorem is called the \emph{base algebra} of an algebra $A$ of dominant dimension at least one and is uniquely determined as the algebra $fAf$ when $Af$ is the minimal faithful projective-injective left $A$-module.
\begin{theorem} \label{yamagatatheorem}
Let $A$ be a Nakayama algebra and $M$ a basic generator-cogenerator.
Then $End_A(M)$ is a Nakayama algebra if and only if $M \in add(B \oplus D(B) \oplus D(B)/soc(D(B)))$.
\end{theorem}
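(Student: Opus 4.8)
The plan is to reduce Nakayama-ness of $\End_A(M)$ to a purely combinatorial condition on its Gabriel quiver and then to read off that condition from the Auslander--Reiten theory of the Nakayama algebra $A$. First I would use the standard fact that a connected algebra given by quiver and relations is a Nakayama algebra if and only if its Gabriel quiver is linear or a single oriented cycle, equivalently every vertex has at most one incoming and at most one outgoing arrow: in that case the radical of each indecomposable projective is generated by at most one arrow, which forces all indecomposable projectives to be uniserial, and dually for the injectives. Applying this to $B=\End_A(M)$ and recalling that, for basic $M$, the vertices of the Gabriel quiver of $B$ are the indecomposable summands $X$ of $M$ while the number of arrows $X\to Y$ equals $\dim_K \rad_{\add M}(X,Y)/\rad^2_{\add M}(X,Y)$, the statement becomes: $B$ is a Nakayama algebra if and only if every indecomposable summand of $M$ admits at most one irreducible morphism into it and at most one out of it inside the category $\add M$.

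Next I would make these irreducible morphisms explicit using that $A$ is a Nakayama algebra, so that every indecomposable $A$-module is uniserial and every nonzero homomorphism between indecomposables factors as an epimorphism onto a uniserial module followed by a monomorphism. The crucial and somewhat delicate point is that irreducibility is taken \emph{relative to} $\add M$: a morphism is an arrow of the quiver of $B$ precisely when it does not factor through a third summand of $M$, so adding or removing summands changes the arrow set. In particular a radical endomorphism of an indecomposable projective-injective summand $P$ becomes irreducible, namely a loop, unless the module $P/\soc(P)$ is also a summand through which it can factor; this is exactly the role played by the $D(A)/\soc(D(A))$-summands.

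For the ``if'' direction I would assume $M\in\add(A\oplus D(A)\oplus D(A)/\soc(D(A)))$; since $M$ is a basic generator-cogenerator, its indecomposable summands are then precisely all indecomposable projectives, all indecomposable injectives, and some of the uniserial modules $I/\soc(I)$ with $I$ indecomposable injective. A direct computation of the irreducible maps among projectives, injectives and these $I/\soc(I)$ then shows that the quiver of $\End_A(M)$ is linear or a single cycle: the potential loops at the projective-injectives are resolved by the present $I/\soc(I)$-summands, and no vertex acquires two arrows in the same direction because the indecomposables that would create a branch are, by hypothesis, absent from $\add M$.

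The main obstacle is the ``only if'' direction. Here I would take an indecomposable summand $X$ of $M$ that is neither projective nor injective nor of the form $I/\soc(I)$, and must exhibit in the quiver of $\End_A(M)$ either a loop or a vertex of in- or out-degree at least two, using only that $M$ contains all projectives and all injectives. The difficulty is that the offending vertex and the nature of the defect depend on which further summands are present: already for $K[x]/(x^3)$ the same excluded simple module forces a loop at the projective-injective when $I/\soc(I)$ is absent but forces a branch point at $I/\soc(I)$ when it is present. I would therefore argue by a case analysis on the neighbours of $X$ in the Auslander--Reiten quiver of $A$, in each case producing two morphisms with distinct targets, or two with distinct sources, or a loop, that are simultaneously irreducible in $\add M$; the verification that the extra morphism genuinely does not factor through another summand is the heart of the argument. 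The cyclic-quiver case requires additional bookkeeping with the indices taken modulo $n$, but no new idea beyond the linear case.
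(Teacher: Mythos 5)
Your overall framework is sound: since $M$ is basic, $\End_A(M)$ is basic; its quiver has the indecomposable summands of $M$ as vertices with arrows counted by $\rad_{\add M}(X,Y)/\rad^2_{\add M}(X,Y)$; and a connected basic algebra is Nakayama precisely when every vertex of its quiver has at most one incoming and at most one outgoing arrow. Your implicit correction of the statement's typo (the condition should read $M \in \add(A \oplus D(A) \oplus D(A)/\soc(D(A)))$) and your $K[x]/(x^3)$ discussion show you have identified the key phenomenon: a radical endomorphism of a projective-injective summand $P$ is a loop unless $P/\soc(P)$ is present to absorb it. However, what you have written is a proof strategy, not a proof: in both directions the substantive verification is announced and then skipped. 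For the ``if'' direction you assert that ``a direct computation\dots shows that the quiver is linear or a single cycle,'' but that computation --- determining the irreducible morphisms in $\add M$ among all projectives, all injectives and the chosen modules $I/\soc(I)$, in both the linear and the cyclic case --- is exactly where the content lies, and it is absent. For the ``only if'' direction you say yourself that ``the verification that the extra morphism genuinely does not factor through another summand is the heart of the argument,'' and then you do not give it: no case analysis is set up, no irreducible morphisms are exhibited, and the difficulty you correctly flag --- that the witnessing loop or branch point depends on which other summands happen to lie in $\add M$ --- is never resolved. As it stands, neither implication is established.

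For comparison, the paper does not prove this statement at all: it quotes it as the main result of \cite{Yam2} specialised to quiver algebras and basic generator-cogenerators. So a completed version of your argument would genuinely add something, namely a self-contained, elementary quiver-combinatorial proof in the finite-dimensional setting, replacing the citation to a more general theorem on serial Noetherian endomorphism rings. But to get there you must actually carry out the two deferred verifications, and the ``only if'' one in particular needs a uniform argument (or an exhaustive case analysis) producing, for every indecomposable summand $X$ that is neither projective, nor injective, nor of the form $I/\soc(I)$, and for every admissible configuration of the remaining summands of $M$, either a loop or a vertex of in- or out-degree at least two in the quiver of $\End_A(M)$.
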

\begin{proof}
This is the main result of \cite{Yam2} specialised to quiver algebras and generator-cogenerators.
\end{proof}
\begin{proposition} \label{nakayamapropo}
Let $A$ be a monomial algebra with minimal faithful projective-injective module left module $Af$. Then $fAf$ is a Nakayama algebra.
\end{proposition}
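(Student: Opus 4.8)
The plan is to reduce everything to the single combinatorial fact that indecomposable projective-injective modules over a monomial algebra are uniserial, and then to transport uniseriality from $A$ to the base algebra along the Schur functor. Since $A$ has a minimal faithful projective-injective left module it has dominant dimension at least one, so by the discussion in the introduction there is also a minimal faithful projective-injective right module $eA$ with $eAe\cong fAf$. It therefore suffices to prove that $eAe$ is a Nakayama algebra, i.e. that all of its indecomposable projective and all of its indecomposable injective modules are uniserial.

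First I would establish the key lemma: if $e_iA$ is an indecomposable projective-injective right module over the monomial algebra $A=KQ/I$, then $e_iA$ is uniserial. Being an indecomposable injective module of finite length, $e_iA$ has a simple essential socle, say $\soc(e_iA)=\langle p_0\rangle$, where $p_0$ is the unique maximal nonzero path starting at $i$. For any nonzero path $q$ starting at $i$ the right ideal $qA$ is nonzero, hence contains the essential simple socle, so $p_0\in qA$. Here monomiality is decisive: $qA$ is spanned by those basis paths of the form $q\,w$, and since $p_0$ is itself a basis path the membership $p_0\in qA$ forces $p_0=q\,w_0$ for an actual path $w_0$, i.e. $q$ is a prefix of $p_0$. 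Thus every nonzero path starting at $i$ is a prefix of the single path $p_0$; there is then exactly one such path of each length, every radical layer of $e_iA$ is simple, and so $e_iA$ is uniserial.

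Next I would transfer this along the exact functor $(-)e\colon \mod A\to \mod eAe$, which is exact because $eA$ is projective. The indecomposable projective $eAe$-modules are exactly the modules $e_iAe$ for the primitive idempotents $e_i\le e$, and each corresponding $e_iA$ is projective-injective, hence uniserial by the lemma. A short check shows that every $eAe$-submodule of $Xe$ has the form $Ne$ for an $A$-submodule $N\le X$: given an $eAe$-submodule $U\subseteq Xe$, take $N=UA$ and verify $UAe=U$ using $Ue=U$. Consequently $N\mapsto Ne$ maps the $A$-submodule lattice of $e_iA$ onto the $eAe$-submodule lattice of $e_iAe$; since the former is a chain, so is the latter, and $e_iAe$ is uniserial. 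This settles the projective side.

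Finally, for the injective side I would invoke left-right symmetry. The algebra $A^{\mathrm{op}}$ is again monomial, and writing $A=\End_B(M)$ with $B=eAe$ and $M$ a generator-cogenerator as in Theorem \ref{domdim2chara}, one has $A^{\mathrm{op}}=\End_{B^{\mathrm{op}}}(DM)$ with $DM$ again a generator-cogenerator, so $A^{\mathrm{op}}$ has dominant dimension at least two with base algebra $(eAe)^{\mathrm{op}}$. Applying the lemma and the transfer step to $A^{\mathrm{op}}$ shows that the indecomposable projective $(eAe)^{\mathrm{op}}$-modules are uniserial, and dualizing (the duality $D$ preserves uniseriality) shows that the indecomposable injective $eAe$-modules are uniserial. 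Hence $eAe\cong fAf$ is a Nakayama algebra. The main obstacle is the lemma, and the crucial point there is that monomiality is exactly what converts the module-theoretic statement $p_0\in qA$ into the combinatorial statement that $q$ is a prefix of $p_0$.
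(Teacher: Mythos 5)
The paper offers no internal proof of this proposition (it simply cites \cite{Mar}, Proposition 2.19), so your argument is necessarily an independent, self-contained route, and most of it is correct and well executed: the key lemma --- over a monomial algebra an indecomposable projective-injective module $e_iA$ is uniserial, because the simple essential socle $\langle p_0\rangle$ together with monomiality forces every nonzero path starting at $i$ to be a prefix of $p_0$ --- is sound, and so is the transfer of uniseriality along $(-)e$ via the observation that every $eAe$-submodule $U$ of $Xe$ equals $UAe$, hence is the image of an $A$-submodule.

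There is, however, a genuine gap in your final paragraph. To handle the injective side you invoke Theorem \ref{domdim2chara} to write $A=\End_B(M)$ with $M$ a generator-cogenerator; that theorem requires dominant dimension at least \emph{two}, whereas the hypothesis of the proposition --- the mere existence of a minimal faithful projective-injective module --- only gives dominant dimension at least \emph{one}. These are genuinely different classes of algebras: Example \ref{finalexample} in this paper is a monomial algebra of dominant dimension exactly one, the proposition applies to it, but Theorem \ref{domdim2chara} does not, so as written your injective-side argument breaks down precisely in the boundary cases the statement is meant to cover. The repair stays entirely inside your own framework and needs no generator-cogenerator at all: right $A^{\mathrm{op}}$-modules \emph{are} left $A$-modules, so $Af$, viewed as the right $A^{\mathrm{op}}$-module $fA^{\mathrm{op}}$, is tautologically the minimal faithful projective-injective right module over the monomial algebra $A^{\mathrm{op}}$, with base algebra $fA^{\mathrm{op}}f=(fAf)^{\mathrm{op}}$. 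Applying your lemma and transfer step to $A^{\mathrm{op}}$ shows that the indecomposable projective right $(fAf)^{\mathrm{op}}$-modules are uniserial, and applying the duality $D$ (which preserves uniseriality) shows that the indecomposable injective right $fAf$-modules are uniserial. Combined with your projective-side conclusion for $eAe\cong fAf$ --- which legitimately uses only the dominant-dimension-one facts quoted in the introduction --- this yields that $fAf$ is a Nakayama algebra, with no appeal to dominant dimension two.
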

\begin{proof}
See \cite{Mar}, proposition 2.19.
\end{proof}
\begin{lemma} \label{APTlemma}
Let $A$ be an algebra with a basic generator-cogenerator $M$ and let $B=End_A(M)$.
Let $M_i$ be the indecomposable direct summands of $M$.
The indecomposable projective $B$-modules are exactly $Hom_A(M,M_i)$ and the indecomposable projective-injective $B$-modules are exactly $Hom_A(M,M_i)$ for the injective indecomposable $A$-modules $M_i$.
\end{lemma}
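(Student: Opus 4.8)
The plan is to deduce both assertions from the classical fact that $\Hom_A(M,-)$ restricts to an equivalence $\add M \xrightarrow{\sim} \add B$ between $\add M$ and the category $\add B$ of projective $B$-modules. First I would verify this equivalence. The functor is additive with $\Hom_A(M,M)=B$, so full faithfulness on $\add M$ reduces by additivity to the case $X=Y=M$, where the canonical map is the identity isomorphism $B\cong\End_B(B)$. Essential surjectivity onto $\add B$ holds because every projective $B$-module is a direct summand of some $B^{n}=\Hom_A(M,M^{n})$ and idempotents split in both categories. An equivalence carries the indecomposable objects of $\add M$, which are precisely the $M_i$, bijectively onto the indecomposable objects of $\add B$, i.e.\ the indecomposable projective $B$-modules; this is the first assertion.

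For the second assertion I would treat the two implications separately. For the easy direction, let $\nu=D\Hom_A(-,A)$ be the Nakayama functor of $A$ and recall the natural isomorphism $\Hom_A(M,\nu P)\cong D\Hom_A(P,M)$, valid for projective $P$. If $M_i$ is injective, then $M_i\cong\nu P$ for the indecomposable projective $P=\nu^{-1}M_i$, and $P\in\add M$ since $M$ is a generator; hence $\Hom_A(M,M_i)\cong D\Hom_A(P,M)$ is the $K$-dual of an indecomposable projective left $B$-module, and so is an indecomposable injective $B$-module. Combined with the first part, $\Hom_A(M,M_i)$ is then projective-injective.

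The converse is the crux. Assume $\Hom_A(M,M_j)$ is injective and let $0\to M_j\to I\to C\to 0$ be an injective envelope of $M_j$, so that $I\in\add M$ because $M$ is a cogenerator. Applying the left exact functor $\Hom_A(M,-)$ gives a monomorphism $\Hom_A(M,M_j)\hookrightarrow\Hom_A(M,I)$, which splits because its source is injective by hypothesis. Thus $\Hom_A(M,M_j)$ is a direct summand of $\Hom_A(M,I)$, and both are objects of $\add B$ since $M_j,I\in\add M$. Transporting this summand relation back through the equivalence $\Hom_A(M,-)\colon\add M\xrightarrow{\sim}\add B$ shows that $M_j$ is a direct summand of $I$ in $\mod A$, and a summand of an injective module is injective. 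The step I expect to be the main obstacle is precisely this last transport: it requires knowing that $\Hom_A(M,-)$ is a genuine equivalence on $\add M$ (not merely faithful), so that a direct-summand relation among images reflects back to $\add M$. A related trap I would be careful to avoid is applying the identity $D\Hom_A(X,M)\cong\Hom_A(M,\nu X)$ for general $X$; it holds only when $X$ is projective, which is exactly why the argument for the easy direction is phrased through a projective $P$ rather than through an arbitrary summand of $M$.
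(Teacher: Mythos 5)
Your proof is correct, but it is genuinely different in character from what the paper does: the paper's entire ``proof'' of this lemma is a one-line citation to Lemma 3.1 of Auslander--Platzeck--Todorov \cite{APT}, embedding the statement into their general homological theory of idempotent ideals, whereas you reconstruct the result from scratch. Your route is the classical projectivization argument: the equivalence $\Hom_A(M,-)\colon \add M \xrightarrow{\sim} \add B$ handles the first assertion; the Nakayama functor identity $\Hom_A(M,\nu P)\cong D\Hom_A(P,M)$ (with $P\in\add M$ because $M$ is a generator) gives that images of injectives are injective; and the split monomorphism into $\Hom_A(M,I)$ for an injective envelope $I\in\add M$ (using that $M$ is a cogenerator), reflected back through the equivalence, gives the converse. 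What your version buys is self-containedness and transparency about exactly where the generator and cogenerator hypotheses enter -- one in each direction of the second assertion; the citation buys brevity and places the lemma in a more general framework. It is worth noting that your converse step (apply the left exact $\Hom_A(M,-)$ to $0\to M_j\to I\to C\to 0$) is precisely the maneuver the paper itself uses later in the proof of Proposition \ref{nakayamaqf2}, so your argument fits the paper's toolkit well. One small point you should make explicit: in the easy direction you use that $\Hom_A(P,M)$ is an indecomposable \emph{projective} left $B$-module; this follows from the contravariant analogue of your first paragraph, namely that $\Hom_A(-,M)$ is a duality from $\add M$ onto the finitely generated projective left $B$-modules, sending $M$ to ${}_BB$ -- standard, but it is a separate statement from the covariant equivalence you verified.
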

\begin{proof}
This is a special case of lemma 3.1. in \cite{APT}.
\end{proof}
As a generalisation of quasi-Frobenius algebras, \emph{QF-2 algebras} were defined as algebras such that the socle of every indecomposable projective module is simple. We refer to \cite{Yam} for a more on those algebras.

\begin{proposition} \label{nakayamaqf2}
Let $A$ be a Nakayama algebra with a basic generator-cogenerator $M$.
Then $B=End_A(M)$ is a QF-2 algebra.
\end{proposition}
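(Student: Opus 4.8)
The plan is to compute the socle of each indecomposable projective $B$-module directly and show it is simple. By Lemma \ref{APTlemma} the indecomposable projective $B$-modules are exactly the $P_i:=\Hom_A(M,M_i)$ as $M_i$ runs through the indecomposable summands of $M$, so it suffices to prove that $\soc(P_i)$ is simple for each $i$. Regarding $P_i$ as a right $B$-module on which $B=\End_A(M)$ acts by precomposition, one has $\rad(B)=\rad_A(M,M)$ (the radical morphisms), whence $\soc(P_i)=\{\varphi\in\Hom_A(M,M_i): \varphi\circ\rad_A(M,M)=0\}$. Writing $M=\bigoplus_j M_j$ and $\varphi=(\varphi_j)_j$ with $\varphi_j\colon M_j\to M_i$, this condition decouples: $\varphi$ lies in the socle precisely when each $\varphi_j$ vanishes on $V_j:=\sum_{h}\operatorname{im}(h)$, the sum of images of all radical morphisms $h$ from summands of $M$ into $M_j$. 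Thus $\soc(P_i)=\bigoplus_j \Hom_A(M_j/V_j,M_i)$, where the $j$-th summand contributes copies of the simple top $S_j$ of $P_j$.

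Next I would pin down which summands $M_j$ actually contribute, using that $A$ is Nakayama, so every $M_j$ and every $M_i$ is uniserial with linearly ordered submodules. First, if $M_j$ is a proper quotient of some summand of $M$ then $V_j=M_j$ and its contribution vanishes; the surviving (``quotient-maximal'') summands turn out to be exactly the indecomposable projective summands, because a non-projective summand $X$ is a proper quotient of its projective cover $P(\top X)$, which is a summand of $M$ since $M$ is a generator, whereas an indecomposable projective cannot be a proper quotient of another indecomposable. Second, for $M_j=P(S)$ projective I would show $V_j=\rad(M_j)$: a radical morphism into an indecomposable projective is never surjective, so all such images are proper submodules of the uniserial $M_j$ and hence contained in $\rad(M_j)$, while $\rad(M_j)$ itself is realised as the image of the composite $P(\top(\rad M_j))\twoheadrightarrow\rad(M_j)\hookrightarrow M_j$ from a projective summand. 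Consequently $M_j/V_j=\top(P(S))=S$ for the projective summands, and $\soc(P_i)=\bigoplus_{S}\Hom_A(S,M_i)$, the sum taken over the simple $A$-modules $S$, each indexed by the unique projective summand $P(S)$.

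To finish I would invoke that $M_i$ is uniserial with simple socle: a nonzero map $S\to M_i$ from a simple module exists if and only if $S\cong\soc(M_i)$, in which case the space of such maps is one-dimensional. Hence exactly one index contributes, and $\soc(P_i)\cong S_{j_0}$ is simple, where $M_{j_0}=P(\soc M_i)$. This shows $B$ is QF-2.

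The main obstacle is the middle step: correctly identifying $\soc(P_i)$ with $\bigoplus_j\Hom_A(M_j/V_j,M_i)$ and then proving that only the indecomposable projective summands survive, with $M_j/V_j$ simple. Both points rest entirely on the Nakayama hypothesis — the linear ordering of submodules of a uniserial module forces each $V_j$ to be a single submodule and, for projective $M_j$, to coincide with $\rad(M_j)$ — together with the generator property of $M$, which guarantees that every $P(S)$ occurs as a summand and that every non-projective summand is dominated by one. Once these structural facts are in place, the simplicity of $\soc(M_i)$ does the rest.
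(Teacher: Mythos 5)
Your proof is correct, but it takes a genuinely different route from the paper's. The paper argues by embedding rather than by computation: for an injective summand $M_i$ of $M$, the module $\Hom_A(M,M_i)$ is projective-injective indecomposable by Lemma \ref{APTlemma}, hence has simple socle; for a non-injective summand $N$, uniseriality gives that $N$ has simple socle, so its injective envelope $I(N)$ is indecomposable and lies in $\add(M)$ because $M$ is a cogenerator, and applying the left exact functor $\Hom_A(M,-)$ to $0 \rightarrow N \rightarrow I(N)$ embeds the projective $\Hom_A(M,N)$ into the projective-injective $\Hom_A(M,I(N))$; since every submodule of a module with simple socle again has simple socle, the proof ends there. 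You instead compute $\soc(\Hom_A(M,M_i))$ outright as the annihilator of $\rad(B)=\rad_A(M,M)$, decouple it as $\bigoplus_j \Hom_A(M_j/V_j,M_i)$, and pin down $V_j$ (equal to $M_j$ for non-projective $M_j$, equal to $\rad(M_j)$ for projective $M_j$); all these steps are sound. It is worth noting that the two arguments use dual halves of the generator-cogenerator hypothesis: the paper needs the cogenerator property (so that $I(N)\in\add(M)$ and Lemma \ref{APTlemma} applies to it), while you need the generator property (so that projective covers of tops and radicals lie in $\add(M)$). Your computation also buys more information: it identifies the socle exactly, namely $\soc(\Hom_A(M,M_i)) \cong \top(\Hom_A(M,P(\soc(M_i))))$, whereas the paper's argument is shorter and uses the Nakayama hypothesis only through the fact that every indecomposable module has simple socle, so it in effect proves the more general statement for any generator-cogenerator all of whose indecomposable summands have simple socle. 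One small caveat: your claim that $\Hom_A(S,M_i)$ is one-dimensional when $S\cong\soc(M_i)$ relies on the paper's standing convention that algebras are given by quiver and relations, so that $\End_A(S)=K$; over a general ground field this space is the division algebra $\End_A(S)$, but your conclusion survives because $\dim_K \top(\Hom_A(M,P(S)))=\dim_K \End_A(S)$ as well, so the socle is still a single copy of that simple.
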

\begin{proof}
Recall from \ref{APTlemma} that every indecomposable projective $B$-module is isomorphic to $Hom_A(M,M_i)$ when $M_i$ denote the indecomposable direct summands of $M$.
Let $I$ be an indecomposable injective direct summand of $M$. Then by \ref{APTlemma} the $B$-modules $Hom_A(M,I)$ are projective-injective indecomposable and thus have simple socle.
Now let $N$ be an indecomposable direct summand of $M$ that is not injective.
Since $A$ is a Nakayama algebra any indecomposable $A$-module is uniserial and thus has simple socle. Now since $N$ has simple socle, its injective envelope $I(N)$ is indecomposable and there is the following short exact sequence:
$$0 \rightarrow N \rightarrow I(N) \rightarrow K \rightarrow 0,$$
where $K$ is the cokernel of the inclusion $N \rightarrow I(N)$.
Applying the functor $Hom_A(M,-)$ to this short exact sequence and using that it is left exact we obtain an inclusion of $B$-modules:
$$0 \rightarrow Hom_A(M,N) \rightarrow Hom_A(M,I(N)).$$
This shows that the indecomposable projective $B$-module $Hom_A(M,N)$ is a submodule of the indecomposable projective-injective $B$-module $Hom_A(M,I(N))$.
But with $Hom_A(M,I(N))$ also every of its submodules has simple socle and thus $Hom_A(M,N)$ has simple socle, which finishes the proof.
\end{proof}

\begin{lemma} \label{monomialqf2impliesnaka}
Let $A$ be a monomial QF-2 algebra.
Then $A$ is a Nakayama algebra.
\end{lemma}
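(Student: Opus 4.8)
The plan is to read the quiver $Q$ of $A$ directly off the QF-2 condition and then invoke the standard combinatorial criterion recalled in the introduction: a connected algebra $KQ/I$ with admissible $I$ is Nakayama exactly when $Q$ is a linear quiver or an oriented cycle, that is, when every vertex is the source of at most one arrow and the target of at most one arrow. So it suffices to bound both the in-degree and the out-degree of every vertex by one.

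Monomiality enters only through an explicit description of the socles of the projectives. Since $A = KQ/I$ is monomial, the nonzero paths form a $K$-basis of $A$, and for each vertex $i$ the indecomposable projective right module $e_iA$ has as a basis the nonzero paths with source $i$, with right multiplication by an arrow sending such a path either to a longer nonzero path or to $0$. Hence an element of $e_iA$ lies in $\soc(e_iA)$ if and only if every path in its support is right-maximal, i.e.\ admits no prolongation to a longer nonzero path; consequently the right-maximal nonzero paths starting at $i$ form a basis of $\soc(e_iA)$, and $\soc(e_iA)$ is simple precisely when there is exactly one such path. Applying the same reasoning to the monomial algebra $A^{\mathrm{op}}$, whose quiver is $Q^{\mathrm{op}}$, yields the dual statement for the indecomposable projective left modules $Ae_i$ in terms of left-maximal paths ending at $i$.

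With this in hand the QF-2 hypothesis translates at once. If some vertex $i$ were the source of two distinct arrows $\alpha, \beta$, then, $I$ being admissible (so $I \subseteq \rad^2$), both $\alpha$ and $\beta$ are nonzero paths, and since $A$ is finite dimensional each prolongs to a right-maximal nonzero path beginning with that arrow; these two paths are distinct, so $\dim \soc(e_iA) \geq 2$, contradicting the simplicity of the socle. Thus every vertex is the source of at most one arrow. Running the identical argument for the left projectives (equivalently, for $A^{\mathrm{op}}$) shows that every vertex is the target of at most one arrow. A connected quiver in which every vertex has in-degree and out-degree at most one is a linear quiver or an oriented cycle, so $A$ is Nakayama.

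The one genuine subtlety is that QF-2 must be invoked on \emph{both} sides, so I read it as the classical left--right symmetric condition: simplicity of the socles of the projective right modules alone does not force the Nakayama property. For instance $K(\,1 \to 3 \leftarrow 2\,)$ is monomial and has all indecomposable projective right modules with simple socle, yet is not Nakayama, since the injective envelope of $S_3$ is not uniserial. The argument therefore depends essentially on extracting the out-degree bound from the right projectives and the in-degree bound from the left projectives, which is exactly the symmetric content of QF-2.
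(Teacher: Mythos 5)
Your proposal is correct and follows essentially the same route as the paper: both proofs rest on the observation that in a monomial algebra the socle of $e_iA$ is spanned by the right-maximal nonzero paths starting at $i$ (so a simple socle forces out-degree at most one), and both obtain the in-degree bound by passing to $A^{\mathrm{op}}$, which --- as you rightly emphasize and the paper uses only implicitly --- requires reading QF-2 as the two-sided condition. The only differences are presentational: the paper argues by contradiction from a vertex violating the Nakayama quiver shape, whereas you argue directly and add the worthwhile remark (with counterexample) that one-sided QF-2 would not suffice.
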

\begin{proof}
Assume $A$ is a monomial QF-2 algebra but not a Nakayama algebra. We can assume that $A \cong KQ/I$ with an admissible monomial ideal $I$. We will show that this gives a contradiction. Since $A$ is not a Nakayama algebra, there is a point in the quiver of $A$ such that at this point there start at least two arrows or there end at least two arrows.
We look at both cases. \newline
\underline{Case 1:} Assume there is a point $i$ in the quiver of $A$ where at least two arrows start. Since we assume $A$ to be QF-2, the indeocomposable projective module $e_i A$ has simple socle, which is equivalent to the condition that here is a unique longest path starting at $i$ since $A$ is assumed to be monomial.
But since $A$ is monomial and there start at least two arrows $\alpha_1$ and $\alpha_2$ at $i$ there are at least two longest paths $p_1= \alpha_1 \cdots$ and $p_2 = \alpha_2 \cdots$ starting with $\alpha_1$ and $\alpha_2$ respectively. Since $A$ is monomial and the admissible ideal $I$ contains no commutativity relations, the paths $p_1$ and $p_2$ can not be identified and $e_i A$ can not have simple socle. This is a contradiciton. \newline
\underline{Case 2:} Assume there is a point $i$ in the quiver of $A$ where at least two arrows end and assume $A$ is not a Nakayama algebra. In this case look at the algebra $B:=A^{op}$, the opposite algebra of $A$. Then $B$ is a monomial QF-2 algebra that is not a Nakayama algebra with a point $i$ where at least two arrows start and we are in case 1 and obtain a contradiction.
\end{proof}

With all the work done, we can now give an easy proof of our main theorem.
\begin{theorem} \label{mainresult}
Let $A$ be a finite dimensional algebra.
The following are equivalent:
\begin{enumerate}
\item $A$ is a monomial algebra with dominant dimension at least two.
\item $A$ is a Nakayama algebra with dominant dimension at least two.
\item $A \cong End_B(M)$, where $B$ is a Nakayama algebra and $M$ a basic generator-cogenerator in $add(B \oplus D(B) \oplus D(B)/soc(D(B)))$.

\end{enumerate}
\end{theorem}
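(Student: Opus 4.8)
The plan is to prove the cycle of implications $(1)\Rightarrow(2)\Rightarrow(3)\Rightarrow(1)$, the key device throughout being the Morita--Tachikawa correspondence of Theorem \ref{domdim2chara}: an algebra of dominant dimension at least two can be written as $A \cong \End_B(M)$ with base algebra $B = fAf$ (for $Af$ the minimal faithful projective-injective left module) and $M$ a generator-cogenerator, and conversely any such endomorphism algebra has dominant dimension at least two. Since all algebras here are assumed connected and basic, the module $M$ supplied by the correspondence may be taken basic: by Lemma \ref{APTlemma} its indecomposable summands correspond bijectively to the indecomposable projective $A$-modules, so no summand is repeated.

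For $(1)\Rightarrow(2)$, assume $A$ is monomial of dominant dimension at least two with minimal faithful projective-injective left module $Af$. Proposition \ref{nakayamapropo} shows the base algebra $B = fAf$ is Nakayama, and Theorem \ref{domdim2chara} lets us write $A \cong \End_B(M)$ for a basic generator-cogenerator $M$ over this Nakayama $B$. Proposition \ref{nakayamaqf2} then guarantees that $A \cong \End_B(M)$ is a QF-2 algebra. Being simultaneously monomial and QF-2, $A$ is a Nakayama algebra by Lemma \ref{monomialqf2impliesnaka}; as the dominant dimension is unchanged, $(2)$ follows.

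Both remaining implications turn on Theorem \ref{yamagatatheorem}. For $(2)\Rightarrow(3)$, let $A$ be Nakayama of dominant dimension at least two; since every Nakayama algebra is monomial, Proposition \ref{nakayamapropo} again makes $B = fAf$ Nakayama, and Theorem \ref{domdim2chara} presents $A \cong \End_B(M)$ with $M$ a basic generator-cogenerator. As $\End_B(M) \cong A$ is Nakayama and $B$ is Nakayama, the forward direction of Theorem \ref{yamagatatheorem} places $M$ in $\add(B \oplus D(B) \oplus D(B)/\soc(D(B)))$, which is exactly $(3)$. For $(3)\Rightarrow(1)$, the generator-cogenerator hypothesis makes $A \cong \End_B(M)$ of dominant dimension at least two by Theorem \ref{domdim2chara}, while the membership of $M$ in the prescribed subcategory makes $A$ a Nakayama algebra --- hence in particular monomial --- by the reverse direction of Theorem \ref{yamagatatheorem}; this yields $(1)$.

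Since the substantive arguments are already packaged in the preceding results, the proof itself is an assembly and the only delicate points are bookkeeping ones. The main one is to confirm in each direction that the base algebra $B = fAf$ is genuinely Nakayama, so that Proposition \ref{nakayamaqf2} and both directions of Theorem \ref{yamagatatheorem} are applicable; this rests on $A$ being monomial (respectively Nakayama) and is furnished by Proposition \ref{nakayamapropo}. A second point, which I expect to require the most care, is that Theorem \ref{yamagatatheorem} is phrased for \emph{basic} generator-cogenerators, so one must check that the $M$ produced by the Morita--Tachikawa correspondence is basic; I would justify this via Lemma \ref{APTlemma}, identifying the indecomposable summands of $M$ with the indecomposable projective summands of $A$.
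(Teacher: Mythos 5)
Your proof is correct and follows essentially the same route as the paper: the cycle $(1)\Rightarrow(2)\Rightarrow(3)\Rightarrow(1)$, with Theorem \ref{domdim2chara}, Proposition \ref{nakayamapropo}, Proposition \ref{nakayamaqf2} and Lemma \ref{monomialqf2impliesnaka} assembling the first implication, and Theorem \ref{yamagatatheorem} combined with Theorem \ref{domdim2chara} handling the other two. Your additional bookkeeping --- verifying that the generator-cogenerator $M$ is basic and that the base algebra $fAf$ is Nakayama before invoking Theorem \ref{yamagatatheorem} --- is care the paper leaves implicit, and it is sound, since $A$ being basic (as it is given by quiver and relations) forces $M$ to be basic.
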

\begin{proof}
We first show (1) $\implies$ (2): Assume $A$ is a monomial algebra with dominant dimension at least two. Thus there is a minimal faithful projective-injective  left $A$-module $Af$ such that $A \cong End_{fAf}(Af)$ by \ref{domdim2chara}. By \ref{nakayamapropo} the algebra $fAf$ is a Nakayama algebra and by \ref{nakayamaqf2} $A$ is a QF-2 algebra since $Af$ is a generator-cogenerator. By \ref{monomialqf2impliesnaka} $A$ is then a Nakayama algebra. \newline
That (2) implies (3) follows directy from \ref{domdim2chara} combined with \ref{yamagatatheorem}.
Assume (3) holds, then by \ref{yamagatatheorem} $A$ is a Nakayama algebra with dominant dimension at least two and thus also a monomial algebra with dominant dimension at least two since every Nakayama algebra is a monomial algebra and thus (1) follows.

\end{proof}
Following \cite{KerYam}, a \emph{Morita algebra} is by definition an algebra $A$ with dominant dimension at least two such that $fAf$ is selfinjective algebra when $Af$ denotes the minimal faithful projective-injective left module.
As a corollary of our main result, we can give a classification of monomial Morita algebras. Note that selfinjective Nakayama algebras are exactly those Nakayama algebras where the indecomposable projective modules all have the same vector space dimension, see for example \cite{SkoYam} theorem 6.15. in chapter IV.
\begin{corollary}
Let $A$ be a monomial Morita algebra. Then $A$ is isomorphic to $End_B(M)$, where $B$ is a selfinjective Nakayama algebra and $M=B \oplus N$, where $N$ is a direct sum of distinct indecomposable modules of the form $P/soc(P)$ when $P$ is an indecomposable projective $B$-module.
\end{corollary}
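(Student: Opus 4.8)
The plan is to feed the hypothesis into the main theorem \ref{mainresult} and then read off the shape of the generator-cogenerator using only that the base algebra is now selfinjective.

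First I would observe that a monomial Morita algebra $A$ is in particular a monomial algebra with dominant dimension at least two. By the equivalence of (1) and (3) in \ref{mainresult} we may therefore write $A \cong End_B(M)$, where $B$ is a Nakayama algebra and $M$ is a basic generator-cogenerator lying in $add(B \oplus D(B) \oplus D(B)/soc(D(B)))$. Here $B$ is the base algebra $fAf$ attached to the minimal faithful projective-injective left module $Af$, and by the very definition of a Morita algebra this algebra $fAf$ is selfinjective. Hence $B$ is a selfinjective Nakayama algebra, which is the first assertion of the corollary.

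Next I would exploit selfinjectivity to simplify the $add$-class in which $M$ lives. Since $B$ is selfinjective, the indecomposable injective $B$-modules coincide with the indecomposable projective $B$-modules, so $add(D(B)) = add(B)$ and consequently $add(B \oplus D(B) \oplus D(B)/soc(D(B))) = add(B \oplus D(B)/soc(D(B)))$. Moreover $D(B)/soc(D(B))$ is precisely the direct sum of the modules $P/soc(P)$ as $P$ runs over the indecomposable projective $B$-modules. Thus every indecomposable summand of $M$ is either an indecomposable projective-injective module or of the form $P/soc(P)$.

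Finally, because $M$ is a generator-cogenerator it contains every indecomposable projective (being a generator) and every indecomposable injective (being a cogenerator); as these two families coincide for the selfinjective algebra $B$, the regular module $B$ is a direct summand of $M$. Writing $M = B \oplus N$, the remaining summands $N$ must come from $add(D(B)/soc(D(B)))$, and since $M$ is basic, $N$ is a direct sum of pairwise distinct indecomposable modules of the form $P/soc(P)$, exactly as claimed. I expect no serious obstacle once \ref{mainresult} is available; the only points needing care are the bookkeeping identity $add(D(B)) = add(B)$ and checking that the summands $P/soc(P)$ genuinely split off from the projective part, i.e.\ that none of them is already projective (each nonzero $P/soc(P)$ has length strictly smaller than the common length of the indecomposable projectives of the selfinjective Nakayama algebra $B$), so that the decomposition $M = B \oplus N$ is the honest projective/non-projective splitting.
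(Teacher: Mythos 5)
Your proposal is correct and follows essentially the same route as the paper: invoke the equivalence (1)$\iff$(3) of Theorem \ref{mainresult}, note that the base algebra $B \cong fAf$ is selfinjective by the definition of a Morita algebra, and then simplify $add(B \oplus D(B) \oplus D(B)/soc(D(B)))$ to $add(B \oplus B/soc(B))$. Your extra care about $add(D(B))=add(B)$ and the fact that the summands $P/soc(P)$ are genuinely non-projective is a harmless refinement of the paper's more terse argument.
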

\begin{proof}
This is a direct consequence of \ref{mainresult} when noting that $D(B) \cong B$ and thus \newline $add(B \oplus D(B) \oplus D(B)/soc(D(B)))=add(B \oplus B/soc(B))$ because $B$ has to be selfinjective.
\end{proof}
We note that Nakayama algebras with dominant dimension at least two are an interesting class of algebras, which are characterised in \cite{Ful} lemma 4.3 and in chapter 5 of \cite{NRTZ} those algebras are characterised using tilting theory.
While all monomial algbras with dominant dimension at least two are Nakayama algebras, there are monomial algebras with dominant dimension equal to one that are not Nakayama algebras as the following example due to Aaron Chan shows:
\begin{example} \label{finalexample}
Let $Q$ be the quiver:
$$Q=\begin{xymatrix}{ \circ^1 \ar[r]^{\alpha_1} & \circ^2  \ar[dr]^{\alpha_4} \ar[dl]^{\alpha_3} & \circ^3 \ar[l]^{\alpha_2} \\ \circ^4 & & \circ^5}\end{xymatrix}$$
and let $I$ be the admissible ideal $I=<\alpha_1 \alpha_3 , \alpha_2 \alpha_4 >$.
Let $A=kQ/I$. Then $A$ is a monomial algebra with dominant dimension equal to one, but $A$ is not a Nakayama algebra.

\end{example}

\end{document}